\newtheorem{theorem}{Theorem}[section]
\newtheorem{lemma}[theorem]{Lemma}
\theoremstyle{definition}
\newtheorem{definition}[theorem]{Definition}
\theoremstyle{remark}
\numberwithin{equation}{section}
\begin{document}

\title{Generalized Small Cancellation Presentations for Automatic Groups}

\author{Robert H. Gilman}
\address{Department of Mathematical Sciences\\Stevens Institute of Technology}
\email{rgilman@stevens.edu}
\thanks{Partially supported by NSF grant 1318716}

\subjclass[2010]{20F05, 20F65 }

\keywords{small cancellation, automatic group, pregroup}

\date{June 28, 2014}

\begin{abstract}
By a result of Gersten and Short finite presentations satisfying the
usual non-metric small cancellation conditions present biautomatic groups.
We show that in the case in which all pieces have length one, a generalization
of the C(3)-T(6) condition yields a larger collection of biautomatic groups.
\end{abstract}

\maketitle

\newcommand{\inv}{^{-1}}
\newcommand{\ra}{\rightarrow} \newcommand{\tto}{\buildrel * \over 
\rightarrow }
\newcommand{\edge}[1]{\buildrel #1 \over \rightarrow } 
\newcommand{\abs}[1]{\vert #1\vert} \newcommand{\Abs}[1]{\Vert #1 \Vert}
\newcommand{\set}[1]{\{ #1 \}}
\newcommand{\subgroup}[1]{\langle #1 \rangle}
\newcommand{\ovr}[1]{\overline #1}
\newcommand{\G}{\Gamma}
\newcommand{\K}{\mathcal K}
\newcommand{\PP}{P^{(2)}}

\section{Introduction}
Gersten and Short show in~\cite{GS1} that groups satisfying any of the usual non-metric small cancellation conditions C(6), or C(4)-T(4), or C(3)-T(6) are biautomatic. Most of their argument with respect to C(3)-T(6) applies verbatim to a more general situation and yields small cancellation type presentations for a larger class of automatic groups. These presentations are essentially finite prees (in the sense of Frank Rimlinger~\cite{Ri}) which satisfy two axioms introduced by Reinhold Baer~\cite{Ba}.

A pree, defined below, is a set with a partial multiplication satisfying certain axioms. An example is an amalgam of two groups; the multiplication is defined for pairs of elements within each group but is not defined for pairs which are not within one group or the other. Amalgams obey an additional axiom which make them pregroups as defined by John Stallings~\cite{St1}. See the recent survey~\cite{GL2012} for an introduction to prees and their axioms.

The axioms we employ, A(4) and A(5), are given in Section~\ref{prees}. Their relation to the usual small cancellation conditions is discussed at the beginning of Section~\ref{cancellation}, where we develop a variation of small cancellation theory. There are many such variations, e.g.~\cite{J1, J2, J3, J4, MW1, MW2, O}. The one which which seem closest to ours is by Uri Weiss~\cite{W}, who shows that V(6) groups with all pieces of length 1 are biautomatic. V(6) denotes a generalization of the C(6) and C(4)-T(4) conditions. 

Section 4 contains the proof of our main theorem, Theorem~\ref{A45}, which says that the universal groups of finite prees satisfying Axioms A(4) and A(5) are biautomatic. As every finite group satisfies these two axioms and is its own universal group, the class of groups covered by Theorem~\ref{A45} includes all finite groups. Finite C(3)-T(6) groups, on the other hand, are all cyclic~\cite{EH}. It would be interesting to have other examples of groups from Theorem~\ref{A45} which are not C(3)-T(6).

\section{Prees}\label{prees}

\begin{definition}\label{pree}
A pree is a set equipped with a partial multiplication which affords an identity, denoted $1$, two-sided inverses, and the following form of associative law.
\end{definition}

It is not hard to show that inverses are unique in a pree. Further if $ab=c$, then $b = a\inv c$ etc. It follows that products $ab=c$ may be thought of as triangles. More precisely the six products obtained by reading the edge labels around the boundary of the triangle on the left-hand side of Figure~\ref{triangles} in either direction and starting at any vertex are defined if any one of them is. As usual an edge read against its orientation contributes the inverse of its label. 
\begin{figure}
\includegraphics[width=.5\textwidth]{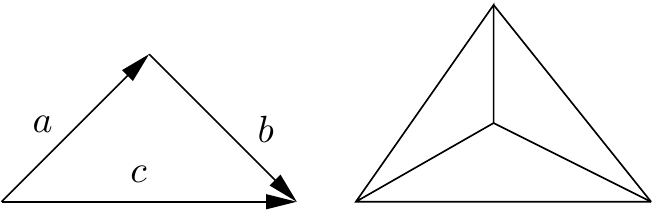}
\caption{The product $ab=c$ (left) and the associative law (right).\label{triangles}}
\end{figure}
\begin{definition}[The associative law]\label{associative}
If $ab$ and $bc$ are defined, then $(ab)c$ is defined if and only if $a(bc)$ is; and when they are defined, $(ab)c=a(bc)$.  
\end{definition}

The associative law is equivalent to a geometric condition, namely that if three triangles fit around a common vertex as in the right-hand side of Figure~\ref{triangles}, then the perimeter is also a valid triangle in 
$P$.

\begin{definition} The universal group of a pree $P$ is the group $U(P)$ 
with generators $P$ and relators $abc\inv$ for every product $ab=c$ 
defined in $P$. We write $\overline a$ for the image of $a$ in $U(P)$.
Elements of $U(P)$ are represented by words over the alphabet $P$. Note that $1$ is a letter in this alphabet.
\end{definition}
\begin{definition}\label{reducible}
A word over $P$ is reducible if the product of two successive letters is defined in $P$. Otherwise the word is called irreducible or reduced.
\end{definition}

It is straightforward to use Tietze transformations to change any finite
presentation into one given by a finite partial multiplication table, and with more
Tietze transformations one can insure that this multiplication table
satisfies the conditions of a pree. Thus every finitely presented group is the universal group of a finite pree, and the pree affords a finite presentation for the group. 

\begin{theorem}\label{embedding} It is undecidable whether or not the canonical morphism $\pi: P \to U(P)$ from a pree $P$ to its universal group is an embedding.
\end{theorem}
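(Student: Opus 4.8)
\emph{Proof proposal.}
My plan is to deduce undecidability from the unsolvability of the word problem of a fixed finitely presented group, by turning a word $v$ (effectively) into a finite pree $P_v$ so engineered that $\pi\colon P_v\to U(P_v)$ fails to be injective precisely when $v$ represents the identity. Fix a finitely presented group $G_0$ with unsolvable word problem, presented so that no generator is trivial, and let $\hat G := G_0 * F$ with $F$ free on two generators $s,t$. Choose $\rho$ large enough that the pree $B$ consisting of the elements of $\hat G$ of length at most $\rho$, with product defined exactly when it stays within the ball, satisfies $U(B)=\hat G$; then $B$ is a fixed finite pree, and since $U(B)$ maps onto $\hat G$ by the identity on letters, $\pi_B$ is an embedding. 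Fix also an integer $M>\rho$.

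Given a word $v$ in the generators of $G_0$, set $u:=tvt\inv=u_1\cdots u_m$, a word in the generators of $\hat G$, and let $P_v$ be $B$ with two ``chains'' adjoined. The first is a sequence of new letters $q_0=1,q_1,\dots,q_m$ and $r_1,\dots,r_m$ with products $q_{l-1}r_l=q_l$, arranged so that, under the canonical map to $U(P_v)=\hat G$, $q_l$ represents $\overline{(u_1\cdots u_l)\,s^{M+l}}$ (equivalently $r_l$ represents $\overline{s^{-(M+l-1)}\,u_l\,s^{M+l}}$). The second is a sequence $\tau_0=1,\tau_1,\dots,\tau_{M+m}$ with products $\tau_{j-1}s=\tau_j$, so $\tau_j$ represents $\overline{s^{\,j}}$, where for $j\le\rho$ one reuses the ball letter $s^{\,j}$. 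Checking that $P_v$ is a pree is routine, since the adjoined letters lie in a tree of triangles that is consistent with the group $\hat G$; and $v\mapsto P_v$ is computable once the finite table of $B$ is hard-wired.

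The heart of the argument is a normal-form computation in the free product $\hat G=G_0*F$. Every \emph{new} letter of $P_v$ represents an element of length exceeding $\rho$ (the exponent of $s$ is at least $M$), hence is distinct from every letter of $B$. Among the new letters, the trailing syllable $s^{M+l}$ of $q_l$, the leading syllable $s^{-(M+l-1)}$ of $r_l$, and the value $s^{\,j}$ of $\tau_j$, together with their inverses, have pairwise incompatible leading or trailing syllables, so no two distinct new letters can coincide \emph{unless} some $\overline{u_1\cdots u_l}=1$ in $\hat G$; and because $u$ is a $t$-conjugate, its only trivial prefix is the whole word, which is trivial exactly when $v=_{G_0}1$, in which case $q_m$ and $\tau_{M+m}$ both represent $\overline{s^{M+m}}$. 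Hence $\pi_{P_v}$ is an embedding if and only if $v\ne_{G_0}1$. Since $v\mapsto P_v$ is computable and $\{v:v=_{G_0}1\}$ is not recursive, no algorithm decides whether $\pi\colon P\to U(P)$ is an embedding.

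I expect the delicate point to be the preceding paragraph. A naive encoding that merely spells $v$ out as a chain of partial products $\overline{u_1\cdots u_l}$ does not work: a proper subword of $v$ may already be trivial in $G_0$, forcing two intermediate letters to coincide for reasons having nothing to do with $v$ itself. The counter $s^{M+l}$ is included to keep the chain letters and the multipliers $r_l$ pairwise distinguishable whatever $v$ does, the shift by $M$ to push all new letters out of the fixed ball $B$, and the conjugation $u=tvt\inv$ to destroy every \emph{proper} trivial prefix. Verifying that these devices really do force the only possible failure of injectivity to be the designated coincidence $q_m=\tau_{M+m}$ — by a careful but mechanical syllable analysis in $\hat G$ — is where the work lies; confirming the pree axioms for $P_v$ is a routine afterthought.
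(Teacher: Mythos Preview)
The paper's own ``proof'' is a single sentence: it invokes Evans' theorem that solvability of the embedding problem for a class of partial algebras implies solvability of the word problem for the corresponding algebras. So you are attempting to carry out directly the reduction that the paper leaves to a citation, and your overall plan --- fix a group with unsolvable word problem, and computably convert each word $v$ into a finite pree whose canonical map fails to embed precisely when $v=1$ --- is exactly the right idea.

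There is, however, a genuine gap in your construction. With only the products you list, namely $q_{l-1}r_l=q_l$ and $\tau_{j-1}s=\tau_j$, the letters $r_1,\dots,r_m$ carry \emph{no} relation to $B$: the only products involving $r_l$ are the triangle $q_{l-1}r_l=q_l$, whose other two vertices are themselves new. Eliminating the $q$'s and $\tau$'s via these relations shows
\[
U(P_v)\;\cong\;U(B)\;*\;F(r_1,\dots,r_m)\;\cong\;\hat G\;*\;F(r_1,\dots,r_m),
\]
not $\hat G$. In this group the images of $q_l=r_1\cdots r_l$ and $\tau_j=s^{\,j}$ lie in different free factors, so $\pi$ is injective for \emph{every} $v$, and the reduction collapses. (There is also an off-by-one inconsistency: with $q_0=1$ your displayed formula gives $q_0=s^{M}$, and the product $q_0r_1=q_1$ forces $r_1=q_1$, contrary to the stated values $r_1=s^{-M}u_1s^{M+1}$ and $q_1=u_1s^{M+1}$.)

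The devices you introduced --- the $t$-conjugation to kill proper trivial prefixes and the $s^{M+l}$ counter to separate chain letters --- are addressing the right difficulties, but they only do work once the $r_l$ (or equivalently the $q_l$) are genuinely tied down to $B$. One repair is to anchor each $r_l$ by its own chain spelling out $s^{-(M+l-1)}u_ls^{M+l}$ one generator at a time, using only edges labelled by elements of $B$; then every new letter becomes a word in $B$ inside $U(P_v)$, the quotient really is $\hat G$, and your syllable analysis in $\hat G$ can proceed. Note that you cannot instead declare the multiplication on $P_v$ to be ``inherited from $\hat G$'' on the full symbol set, since deciding which products land back in $P_v$ would itself require the word problem in $\hat G$, destroying computability of $v\mapsto P_v$. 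The delicate point is thus not the normal-form calculation you flagged, but getting $U(P_v)=\hat G$ while keeping the partial table effectively computable.
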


This theorem is a special case of a result of Trevor Evans~\cite{Ev} 
which says that if the embedding problem is solvable for a class of 
finite partial algebras, then the word problem is solvable for the 
corresponding class of algebras.

Now we present the additional axioms we need. There are two of them, and they have the geometric incarnations given in Figure~\ref{spree}

\begin{figure}
\includegraphics[width=.9\textwidth]{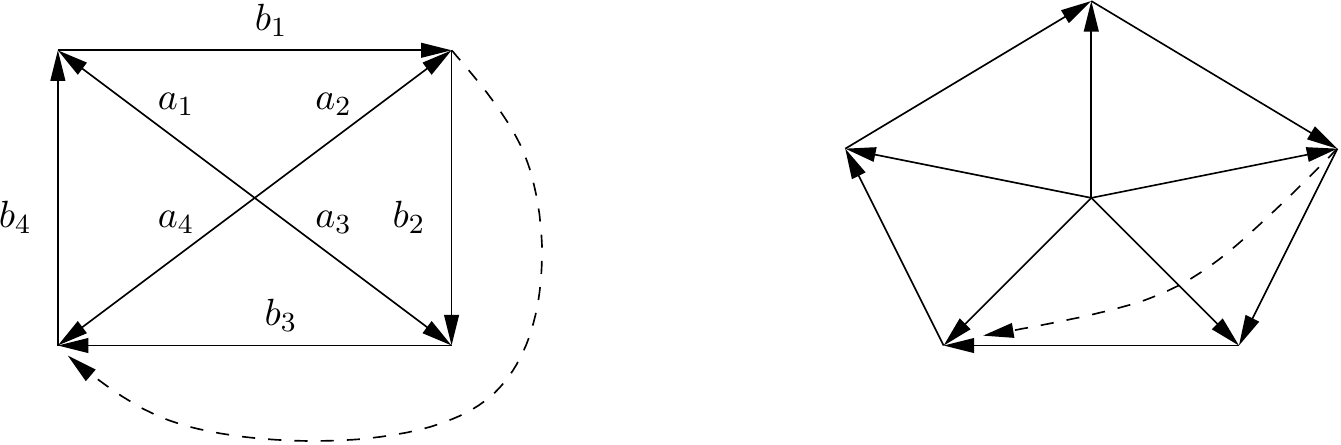}
\caption{Axioms A(4) and A(5) with $b_2b_3$ defined. \label{spree}}
\end{figure}

\begin{description}
\item [Axiom A(4)] If the products $a_1\inv a_2=b_1$, $a_2\inv a_3=b_2$, $a_3\inv a_4=b_3$, and $a_4\inv a_1=b_4$ are defined, then at least one of the products $b_ib_{i+1}$ (indices read modulo 4) is defined. 
\item [Axiom A(5)] If the products $a_1\inv a_2=b_1$, $a_2\inv a_3=b_2$, $a_3\inv a_4=b_3$, $a_4\inv a_5=b_4$ and $a_5\inv a_1=b_5$ are defined, then at least one of the products $b_ib_{i+1}$ (indices read modulo 5) is defined.
\end{description}

\section{Small Cancellation Theory}\label{cancellation}

We require a simple extension of small cancellation theory for diagrams constructed from triangles

 All relations in the presentation $P$ for $U(P)$ have length 3, and there are no pieces of length 2. Indeed $ab$ can occur in at most one relator, namely the relator $abc\inv$ corresponding to a product $ab=c$ defined in $P$. Thus the small cancellation condition C(3) holds.

Similarly the condition T(6) says that the perimeter of any diagram formed by fitting 2, 3, 4 or 5 triangles around a common vertex (as in Figure~\ref{spree} and the right-hand side of Figure~\ref{triangles}) contains a subword $aa\inv$ for some $a\in P$. Axioms A(4) and A(5) on the other hand together with the Associative law enforce the weaker condition that the perimeter is reducible.
 
\begin{definition}
A diagram is a planar directed labeled graph whose boundary is a simple 
closed curve and whose faces are triangles each with 3 distinct 
vertices. The label of the boundary of each face is a relator in $P$. 
The number of triangular faces of a diagram is its area. The boundary word, determined up to cyclic permutation and inverse, is the label of the boundary path. Boundaries of triangles are sometimes called perimeters. For brevity boundary words are sometimes themselves called boundaries.
\end{definition}

The label of a path in a diagram is the product of the labels of edges 
except that an edge traversed against its orientation contributes the 
inverse of its label. We may change the orientation of an edge as long 
as we invert its label. To remove a triangle with one or more edges on 
the boundary we remove the edges on the boundary except we keep any 
vertices which began with degree more than 2.

\begin{lemma} If $D$ is a diagram with area greater than one, there is a 
triangle in $D$ with one or more edges on the boundary such that 
removing that triangle results in a diagram.
\end{lemma}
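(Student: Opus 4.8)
The plan is to pin down exactly which boundary triangles can legitimately be removed, and then to show that such a triangle always exists when the area exceeds one.

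First I would analyze the prescribed removal operation on a triangle $T$ having at least one edge on $\partial D$. Since $\partial D$ is a simple closed curve, each edge of $\partial D$ lies on exactly one triangle of $D$, so $T$ has one or two edges on $\partial D$ (three is impossible once the area exceeds one, as then $D$ would be $T$ alone). If $T$ has a single boundary edge $e$ and its opposite vertex lies in the interior of $D$, then removing $e$ replaces it on the boundary path by the other two edges of $T$; their common vertex was previously interior, so the new boundary word still traces a simple closed curve, and the underlying complex is $D$ with an open triangle excised along one boundary arc, hence again a disk. If $T$ has two boundary edges meeting at a vertex $v$, then $v$ has degree $2$ (the two boundary edges at $v$ are precisely the two edges of $T$ at $v$), so the operation deletes $v$ together with those edges and puts the third edge of $T$ on the boundary; once more the new boundary is a simple closed curve and the complex is a disk. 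Thus any boundary triangle meeting $\partial D$ in a connected arc is removable.

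The only remaining configuration — call such a triangle \emph{bad} — is a triangle $T=pqc$ with exactly the edge $pq$ on $\partial D$ while the opposite vertex $c$ also lies on $\partial D$ (and is then non-adjacent to $p$ and to $q$ along $\partial D$, else $T$ would have a second boundary edge). For a bad triangle the operation makes $c$ occur twice on the boundary path, so the result is not a diagram. Hence the lemma reduces to showing that not every boundary triangle of $D$ is bad, which I would prove by contradiction. If every boundary triangle is bad, then for each boundary edge $pq$ the triangle $pqc$ on it contributes an interior edge $qc$ with both endpoints on $\partial D$ — a \emph{chord} — and this chord cuts off a sub-disk bounded by $qc$ and the arc of $\partial D$ from $q$ to $c$ not through $p$; since $c$ is not adjacent to $q$, that arc has at least two edges, so the sub-disk has area at least one. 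Among all sub-disks of $D$ cut off by a chord and having area at least one, choose one, $S$, of minimal area; say $S$ is bounded by a chord $u_0u_\ell$ and the boundary arc $u_0u_1\cdots u_\ell$. The unique triangle $T^\ast$ on the boundary edge $u_0u_1$ lies in $S$; being bad, its third vertex is $u_j$ for some $j$ with $2\le j\le\ell$, and $j\ne 2$ (otherwise $T^\ast$ would have the second boundary edge $u_1u_2$), so $j\ge 3$ and in particular $\ell\ge 3$. The edge $u_1u_j$ of $T^\ast$ is again a chord, and it cuts off from $S$ the sub-disk bounded by $u_1u_j$ and the arc $u_1u_2\cdots u_j$; this sub-disk has area at least one, lies in $S$, and does not contain $T^\ast$, so its area is strictly smaller than that of $S$, contradicting minimality.

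The part I expect to require the most care is the planarity bookkeeping inside this last step: justifying via the Jordan curve theorem that each chord splits $D$ (and the sub-disk $S$) into two disks, that the triangle on a boundary edge actually lies in the relevant sub-disk rather than crossing its chord, that the newly produced sub-disk is genuinely contained in $S$, and that it still has area at least one. Each of these is routine but must be stated carefully; once they are in place the area-descent closes the argument. (One could equally well phrase the final step as an induction on area instead of via a minimal counterexample, but the geometric bookkeeping is identical.)
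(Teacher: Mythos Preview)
Your argument is correct and follows essentially the same route as the paper's proof: both isolate the ``bad'' configuration (a boundary triangle whose opposite vertex also lies on $\partial D$), then derive a contradiction by choosing a complementary region of minimal area and producing a strictly smaller one from the chord of a bad triangle inside it. Your version is simply more explicit about the planarity bookkeeping that the paper leaves implicit.
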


\begin{proof}
If there is a triangle with two edges on the boundary, then because the boundary is a simple closed curve, the common vertex has degree 2, and the triangle may be removed. Likewise a triangle with only one edge on the 
boundary and the opposite vertex in the interior of the diagram may be removed. 

Suppose 
then that all triangles meeting the boundary have exactly one edge and 
its opposing vertex on the boundary. For each such triangle the 
complement of the diagram is divided into two parts. Since the triangle 
has two edges not on the boundary, both parts have positive area. Pick a 
triangle such that one of the complementary parts has minimum possible 
area. Since that area is positive, it must contain another triangle 
meeting the boundary. But then that triangle would give a complementary 
part of smaller area. Thus this last case cannot arise.
\end{proof}

The preceding lemma has the following consequence.

\begin{lemma}\label{recursive}
Every diagram is generated by starting with a single triangle and attaching subsequent triangles in the following way. A triangle is attached to a diagram by identifying one or two edges of the triangle with an edge or two successive edges respectively of the boundary of the diagram so that identified edges have the 
same label and are oriented in the same direction or have inverse labels 
and are oriented in opposite directions. In addition two vertices of a
triangle may not be identified when it is attached.
\end{lemma}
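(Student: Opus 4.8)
The plan is to induct on the area of the diagram $D$. The base case is area $1$, where $D$ is a single triangle and there is nothing to prove. For the inductive step I would invoke the preceding lemma to extract a triangle $T$ meeting $\partial D$ whose removal (in the sense described just before that lemma) yields a diagram $D'$; since the area of $D'$ is one less than that of $D$, the inductive hypothesis produces a construction of $D'$ by the prescribed process, and it then remains only to verify that restoring $T$ to $D'$ is a single legal attachment. Appending that step to the construction of $D'$ gives a construction of $D$.

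To carry out the last point I would use the finer information contained in the proof of the preceding lemma, which produces a removable $T$ of one of two types. First, if $T$ has two edges on $\partial D$, these meet at a vertex $w$ of degree $2$; writing the boundary edges as $(x,w)$ and $(w,y)$ and the third edge as $e=(x,y)$, the removal deletes $w$ and those two edges and leaves $e$ on $\partial D'$. Restoring $T$ identifies the single edge $e$ of $T$ with the edge $e$ of $\partial D'$, with labels and orientations automatically matching; this is the one-edge form of an attachment, and since $x$, $y$, $w$ are distinct no two vertices of $T$ are identified. Second, if $T$ has exactly one edge on $\partial D$, the proof of the preceding lemma guarantees we may take the opposite vertex $v$ to lie in the interior; writing the boundary edge as $e=(x,y)$ and the interior edges as $(x,v)$ and $(v,y)$, the removal deletes $e$, keeps $v$, and turns $(x,v)$ and $(v,y)$ into two successive edges of $\partial D'$ meeting at $v$. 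Restoring $T$ identifies these two successive boundary edges with the two corresponding edges of $T$, again matching labels and orientations, which is the two-edge form of an attachment; and $x$, $y$, $v$ being distinct, no two vertices of $T$ are identified. In each case the passage from $D'$ to $D$ is a permitted attachment, closing the induction.

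The step I expect to need the most care is purely bookkeeping: reconciling the deletion operation defined before the preceding lemma with the attachment operation in the statement, i.e.\ checking in each case that the edges of $D'$ with which $T$ is glued back form a single boundary edge or two consecutive boundary edges, and that the three vertices of $T$ remain distinct so that the non-identification clause is respected. Both facts follow immediately from the description of how a boundary triangle is removed together with the standing requirement that every triangular face has three distinct vertices, so I do not anticipate any real difficulty — the substance of the lemma is essentially already packaged in the preceding one.
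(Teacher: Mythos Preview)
Your proposal is correct and follows exactly the route the paper intends: the paper states this lemma without proof, merely flagging it as a consequence of the preceding lemma, and your induction on area with the two-case analysis of the removable triangle is precisely the argument that makes that claim rigorous. The care you take in verifying that reattachment is a legal one- or two-edge gluing with distinct vertices is appropriate and not supplied in the paper itself.
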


\begin{lemma}\label{boundary} If a diagram has two triangles with the same vertices, then it has an internal vertex of degree two.
\end{lemma}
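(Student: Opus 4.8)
The plan is to reduce to the case where the two triangles share two of their three edges, which is immediate, and to dispose of the remaining possibilities by a minimality argument.

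First suppose the two triangles $T_1 \neq T_2$ have common vertex set $\set{A,B,C}$ and share two edges, say the edge joining $A$ to $B$ and the edge joining $B$ to $C$. At $B$ each of $T_1$, $T_2$ is incident to exactly these two edges, so $T_1$ and $T_2$ are the faces filling the two angular sectors at $B$ cut out by them. Since those are the only two sectors bounded by these edges, $B$ has no further incident edge, so $\deg B = 2$; and since both sectors at $B$ are faces, $B$ lies in the interior of $D$. Thus $B$ is as required.

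In general, among all sub-diagrams $\Delta$ of $D$ (sub-disks with the inherited triangulation) that contain two distinct faces with a common vertex set, pick one of minimal area, with such faces $S_1, S_2$ of vertex set $\set{P,Q,R}$. Any interior vertex of $\Delta$ of degree $2$ is an interior vertex of $D$ of degree $2$, since its whole star lies in $\Delta$; so it suffices to find one in $\Delta$. If $S_1$ and $S_2$ share two edges we are done by the previous paragraph. Otherwise at least two of the three pairs of correspondingly placed edges of $S_1, S_2$ consist of two distinct, necessarily parallel, edges; fix such a pair $g \neq g'$, both joining $P$ to $Q$. The simple closed curve $g \cup g'$ separates $\Delta$, and since the edges of $S_i$ running from $R$ to $P$ and from $R$ to $Q$ meet $g \cup g'$ only at $P$ and $Q$, the side of $g \cup g'$ containing $R$ contains all of $S_1$ and $S_2$. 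Unless $g \cup g'$ is the whole boundary of $\Delta$, one checks that this side is a sub-disk of strictly smaller area still containing $S_1$ and $S_2$, contradicting minimality. Hence $\Delta$ is the ``bigon'' bounded by $g$ and $g'$, with $R$ interior.

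The remaining configuration---a disk diagram whose boundary consists of two parallel edges and which contains two triangles with the same vertex set---is the crux, and I would finish it by induction on area, stripping off a boundary triangle by the removal lemma preceding Lemma~\ref{recursive}. The obstacle is that removing such a triangle turns a bigon boundary into a triangular one, so the induction must be run for both boundary shapes simultaneously, and one must check at each step that either a pair of faces with equal vertex sets or a separating parallel pair of edges survives; this bookkeeping is where the real work lies. (Alternatively one can argue from Lemma~\ref{recursive} directly: a one-edge attachment is forced to introduce a new vertex, hence never creates a face sharing all its vertices with an earlier one, so two such faces can arise only through a two-edge attachment, which buries the common vertex of the two matched boundary edges in the interior; the same bookkeeping is then needed to see that that vertex can be taken to have degree $2$.)
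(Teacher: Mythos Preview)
Your argument is incomplete, as you yourself note, and there is a gap already in the second paragraph: when you claim that the side of $g\cup g'$ containing $R$ is a sub-disk of $\Delta$, you need the boundary of that side to be a simple closed curve, but this fails whenever $P$ or $Q$ lies on $\partial\Delta$, since that boundary then passes through the shared vertex twice. So the minimality reduction to a bigon does not go through cleanly, quite apart from the unfinished bigon case.

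The paper's proof is exactly the alternative you sketch parenthetically: induction on area via Lemma~\ref{recursive}, reducing to the case where the last-attached triangle $T$ was glued along two consecutive boundary edges with middle vertex $v$, and then asserting that $v$ is internal of degree~$2$. The paper supplies none of the ``bookkeeping'' you anticipate, and your suspicion is well founded; indeed that step, and hence the lemma as stated, is false. For a counterexample, take $D'$ with vertices $a,b,c,d,v$, two parallel edges from $v$ to $b$, faces with vertex sets $\{a,v,b\}$, $\{v,b,c\}$, $\{v,c,d\}$, $\{v,d,b\}$, $\{b,c,d\}$ (the first and fourth using different $vb$-edges), and boundary the $3$-cycle on $a,v,b$. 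No two faces of $D'$ share a vertex set, yet $v$ has degree~$5$; attaching $T$ along the boundary path $a,v,b$ produces a diagram $D$ with two faces on $\{a,v,b\}$ whose interior vertices have degrees $5,3,3$ (one checks the curvature identity of Lemma~\ref{le:curvature} holds). The lemma is only ever invoked for minimal diagrams (in Lemma~\ref{d6}), where the desired conclusion may be salvageable, but neither your approach nor the paper's establishes the lemma as stated.
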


\begin{proof}
Let $D$ be a diagram. If $D$ has area 1, there is nothing to prove. Otherwise $D$ is formed by attaching a triangle $T$ to a diagram $D'$ of smaller area. By induction we may assume $D'$ does not have two triangles with the same vertices. Consequently the vertices of $T$ are also the vertices of some triangle $T'$ of $D'$. But then these vertices must be the vertices of a path of length 2 on the boundary of $D'$. It follows that the middle vertex is an internal vertex of degree 2 in $D$.
\end{proof}

\begin{definition}
 A minimal diagram is one which has minimum area among all diagrams with the same boundary word (up to cyclic permutation and inverse.).
\end{definition}

\begin{lemma}\label{minimal}
Let $D$ be a minimal diagram with boundary $w$. If $D$ has an internal vertex of degree 2, then $D$
consists of two triangles joined along two edges and $w=aa\inv$.
\end{lemma}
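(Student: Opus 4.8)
The plan is to show that an internal vertex of degree~$2$ forces the presence of a ``collapsible bigon'' made of two triangles, and that in a minimal diagram such a bigon must be all of $D$. Accordingly, let $v$ be an internal vertex of degree~$2$. Exactly two faces $T_1,T_2$ are incident to $v$, and since $v$ has only its two incident edges $e_1,e_2$, each of $T_1,T_2$ has both $e_1$ and $e_2$ among its edges. Writing $p,q$ for the endpoints of $e_1,e_2$ other than $v$, I would first note that $T_1$ and $T_2$ both have vertex set $\{v,p,q\}$, and that these are three distinct vertices, since a triangular face has neither a loop nor two edges between the same pair of its vertices. Hence the third edge $f_1$ of $T_1$ and the third edge $f_2$ of $T_2$ each join $p$ to $q$, and $f_1\ne f_2$, for otherwise $T_1$ and $T_2$ would share all three of their edges and their union would be a sphere, which cannot lie inside a planar diagram. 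Thus $D_0:=T_1\cup T_2$ is a sub-disk of $D$ whose boundary is the bigon $f_1\cup f_2$.

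Next I would extract the one algebraic ingredient. The boundary labels of $T_1$ and $T_2$ are relators, and since $e_1,e_2$ are interior edges shared by the two faces, they are traversed in opposite directions around $T_1$ and around $T_2$; so, after orienting $e_1,e_2$ away from $v$ with labels $x,y$, the two boundary labels are, up to cyclic rotation and inversion, $x\,c\,y\inv$ and $x\,c'\,y\inv$, where $c,c'$ are the labels of $f_1,f_2$ oriented from $p$ to $q$. A length-$3$ relator $u\,v\,w\inv$ records the defined product $uv=w$ in $P$, so here $xc=y$ and $xc'=y$; since products in a pree are unique and $ab=d$ forces $b=a\inv d$, this gives $c=x\inv y=c'$. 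Writing $a$ for this common label, the boundary of the bigon $D_0$, traversed as $f_1$ from $p$ to $q$ and then $f_2$ from $q$ to $p$, has label $a\,a\inv$.

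Then I would invoke minimality. Suppose, toward a contradiction, that $D\ne D_0$. If both $f_1$ and $f_2$ lay on $\partial D$, then the simple closed curve $f_1\cup f_2$ would be a subcurve of the simple closed curve $\partial D$ and hence all of it, forcing $D=D_0$; so at most one of $f_1,f_2$ is a boundary edge of $D$. I would delete $T_1$, $T_2$, the edges $e_1,e_2$, and the vertex $v$ from $D$, and then: if neither $f_1$ nor $f_2$ lies on $\partial D$, identify $f_1$ with $f_2$ (permissible, as they carry the same label and run the same way from $p$ to $q$); while if, say, $f_1\subseteq\partial D$, delete the now face-free edge $f_2$ and let $f_1$ take its place. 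In either case the result $D'$ is again a diagram: it is planar, its faces are those of $D$ other than $T_1,T_2$ and are still triangles with three distinct vertices (no face of $D$ besides $T_1,T_2$ is incident to both $f_1$ and $f_2$, again because a triangle carries at most one edge between two of its vertices), and its boundary word agrees with $w$ up to cyclic permutation and inverse, since we either left $\partial D$ untouched or exchanged a copy of $f_1$ for the equally labelled $f_2$. As $D'$ has two fewer faces than $D$, this contradicts the minimality of $D$. Therefore $D=D_0$: it is the union of the two triangles $T_1,T_2$ joined along the two edges $e_1,e_2$, and $w$ is the boundary word of $D_0$, which the previous paragraph computed to be $a\,a\inv$.

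The step I expect to be the main obstacle is the topological bookkeeping of the third paragraph: checking that collapsing the bigon (or deleting the boundary copy of an $f_i$) really produces a planar disk diagram with an unchanged boundary word, and handling the various degenerate ways in which $D_0$ can meet $\partial D$ --- for instance when one or both of $p,q$, but no edge of $D_0$, lies on $\partial D$. The local analysis at $v$ and the single pree identity are short and routine by comparison.
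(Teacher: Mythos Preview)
Your argument is correct and follows essentially the same route as the paper's proof: locate the two triangles sharing the degree-$2$ internal vertex, use the pree identities to see that their third edges carry the same label, and then invoke minimality to conclude that the resulting bigon is all of $D$. You supply considerably more topological detail than the paper (which dispatches the collapse in a single clause), but the strategy and the key steps coincide.
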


\begin{proof}
It is easy to see that no vertex of any diagram has degree 1. If $p$ is an internal vertex of degree 2, then $p$ is a vertex of two triangles both of which contain the two edges incident to $p$. Hence both edges opposite $p$ have label $c=a\inv b$. Unless both of these edges are on the boundary of $D$, they can be combined to make a diagram of smaller area. If they are on the boundary, they must be the whole boundary because the boundary of $D$ is a simple closed curve. Thus $D$ consists of two triangles joined along two edges, and $w=cc\inv$.
\end{proof}

\begin{lemma}\label{le:curvature} Consider any diagram, and let $d(p)$ be the degree of each 
vertex $p$, then
\begin{equation} \label{eq:curvature}
\sum^{\circ} 4-d(p) =6 + \sum^{\bullet} d(p)-6
\end{equation}
where the first sum is over all boundary vertices and the second is over 
all internal ones.
\end{lemma}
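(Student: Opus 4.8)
The plan is to derive \eqref{eq:curvature} from Euler's formula for the disk together with two elementary counting identities. Let $V$, $E$, $F$ denote the numbers of vertices, edges, and (triangular) faces of the diagram $D$, and split $V = V_\partial + V_{\mathrm{int}}$ and $E = E_\partial + E_{\mathrm{int}}$ into boundary and interior parts. By Lemma~\ref{recursive} the diagram $D$ is obtained from a single triangle by repeatedly attaching triangles along edges, so it is connected; hence Euler's formula, with $F$ counting only the bounded faces, gives $V - E + F = 1$.

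Next I would record the two incidence identities. Since the boundary of $D$ is a simple closed curve, it is a cycle alternating between vertices and edges, so $E_\partial = V_\partial$. Counting face--edge incidences: each of the $F$ faces is a triangle and so meets three edges, each interior edge is met by two faces, and each boundary edge by exactly one (its other side being the unbounded face), so $3F = 2E_{\mathrm{int}} + E_\partial = 2E - V_\partial$. Combining this with $F = 1 - V + E$ and $V = V_\partial + V_{\mathrm{int}}$ and solving for $E$ yields $E = 2V_\partial + 3V_{\mathrm{int}} - 3$, equivalently $\sum_p d(p) = 2E = 4V_\partial + 6V_{\mathrm{int}} - 6$, the sum being over all vertices.

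Finally I would expand the two sides of \eqref{eq:curvature}: the left side is $4V_\partial - \sum^{\circ} d(p)$ and the right side is $6 - 6V_{\mathrm{int}} + \sum^{\bullet} d(p)$, so their difference equals $4V_\partial + 6V_{\mathrm{int}} - 6 - \sum_p d(p)$, which is $0$ by the identity just obtained. There is no serious obstacle here; the only points demanding some care are the connectivity of $D$ (needed to invoke Euler) and the incidence count $3F = 2E_{\mathrm{int}} + E_\partial$, which relies on the fact that no interior edge is a bridge --- true because each triangular face has three distinct edges on its boundary walk, whereas a bridge would be traversed twice by a single face. As a sanity check, a single triangle has $V_\partial = 3$, $V_{\mathrm{int}} = 0$ with both sides equal to $6$, and the two-triangle bigon of Lemma~\ref{minimal} has $V_\partial = 2$, $V_{\mathrm{int}} = 1$ with both sides equal to $2$.
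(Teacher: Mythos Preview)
Your argument is correct: the identity \eqref{eq:curvature} is just the combinatorial Gauss--Bonnet formula for a triangulated disk, and you derive it cleanly from Euler's formula together with the two incidence counts $E_\partial = V_\partial$ and $3F = 2E - V_\partial$. One small remark: your aside about bridges is unnecessary. The count $3F = 2E_{\mathrm{int}} + E_\partial$ holds even in the presence of an interior bridge, since such an edge would be traversed twice by the boundary walk of its (single) adjacent face and hence still contribute $2$ to the incidence count; the real reason there are no such edges here is that each face boundary is a simple $3$-cycle, but you do not need this for the formula.

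The paper takes a different route: it argues by induction on the area, using the recursive description of diagrams in Lemma~\ref{recursive}. One checks the base case of a single triangle directly and then verifies that attaching a triangle along one or two boundary edges preserves the identity, tracking how the degrees and the boundary/interior status of the affected vertices change. Your Euler-formula argument is more conceptual and does not rely on the recursive structure; the paper's inductive argument is more hands-on but dovetails with the machinery already set up. Either is perfectly adequate here.
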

\begin{proof}
Argue by induction on area.
\end{proof}

Let $L(P)$ be the language of all labels of boundaries of diagrams. Each 
diagram determines multiple labels as we may start at any vertex 
on the perimeter and proceed clockwise or counterclockwise. Notice that 
because we do not identify vertices when building a diagram by attaching 
triangles, the boundary of every diagram has length at least two.

\begin{lemma} $L(P)$ is the set of all words $w$ over the alphabet $P$ of length at least 2 such that $\overline w$ is the identity in $U(P)$.
\end{lemma}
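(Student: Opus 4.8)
The statement is van Kampen's lemma for the presentation of $U(P)$ with generators $P$ and relators $abc\inv$, one for each product $ab=c$ defined in $P$, restricted to the special notion of ``diagram'' fixed above; I would prove the two inclusions separately. For $L(P)\subseteq\set{w:\abs{w}\ge 2,\ \ovr{w}=1}$ the length bound is the remark made just before the lemma: attaching a triangle never identifies two of its vertices, so no boundary has length $1$. That $\ovr{w}=1$ whenever $w$ bounds a diagram $D$ follows by induction on the area of $D$, using Lemma~\ref{recursive}. When $D$ is a single triangle, $w$ is a cyclic permutation, possibly inverted, of a relator, hence trivial in $U(P)$. If $D$ arises from a smaller diagram $D'$ by attaching a triangle $T$ carrying the relator $abc\inv$ along one or two boundary edges, then reading $\partial D$ starting at a vertex of $T$ that remains on the boundary shows that $\partial D=ut$ and $\partial D'=us$, where $u$ is the common portion and $st\inv$ is a cyclic permutation of $abc\inv$; hence $\ovr{s}=\ovr{t}$ in $U(P)$ and $\ovr{\partial D}=\ovr{\partial D'}=1$.

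For the reverse inclusion, suppose $\abs{w}\ge 2$ and $\ovr{w}=1$, and induct on $\abs{w}$. If $w$, read cyclically, has two consecutive letters $a,b$ with $ab=c$ defined in $P$, and $\abs{w}\ge 3$, then replacing $ab$ by $c$ gives $w'$ with $\ovr{w'}=1$ and $\abs{w'}=\abs{w}-1\ge 2$. By induction $w'$ bounds a diagram $D'$, and since every boundary edge of a diagram joins distinct vertices, I may glue the triangle labelled $abc\inv$ to $D'$ along its $c$-edge, creating a single new vertex, which is automatically unidentified; Lemma~\ref{recursive} certifies that the result is a diagram, and its boundary is $w$.

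This leaves the cases where $w$ is reduced, or $\abs{w}=2$, in which $w$ carries no local relator information, and here I would invoke van Kampen's lemma in full. Since $\ovr{w}=1$, the word $w$ is, in the free group on $P$, a product of conjugates of relators, which yields a possibly singular disk diagram over the presentation whose boundary reads $w$ up to free reduction. The step I expect to be the main obstacle is converting this into a genuine diagram --- triangular faces with three distinct vertices, boundary a simple closed curve, attachments never identifying vertices --- while keeping the boundary word equal to $w$. I would first remove spurs (a boundary edge traversed straight back on itself, coming from the conjugating words), so that the boundary word becomes literally $w$, and remove degenerate $2$-cells (a relator $abc\inv$ glued on with two of its vertices coincident); both are standard. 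Then I would handle cut vertices, where the diagram pinches --- the delicate point: a cut vertex splits $w$ into strictly shorter words, each trivial in $U(P)$, which can be filled via the induction hypothesis and then reassembled using auxiliary edges labelled $1$. This is possible because $1\in P$ and the products $1\cdot 1=1$, $x\cdot 1=x$, $1\cdot x=x$, and $x\cdot x\inv=1$ are always defined in a pree; for instance $aa\inv bb\inv$ is realized as the cone on its four boundary vertices with spokes labelled $1,a\inv,1,b\inv$ rather than as two bigons meeting at a point. The outcome is a genuine diagram with boundary word $w$, so $w\in L(P)$.
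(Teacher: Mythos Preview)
Your forward inclusion (boundary words are trivial in $U(P)$ and have length $\ge 2$) is correct and is essentially the paper's argument, run by induction on area via Lemma~\ref{recursive}.

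For the reverse inclusion the paper takes a much shorter route than you do, and the difference is worth noting. The paper exploits that $U(P)$ is presented as a \emph{semigroup}: it is the free semigroup on $P$ modulo the congruence generated by $ab\sim c$ for each defined product. Hence any word $w$ with $\ovr w=1$ is joined to any fixed relator $abc\inv$ by a finite chain of single substitutions $ab\leftrightarrow c$; no free reduction, no van~Kampen diagram, is ever invoked. The only obstacle is that some intermediate words might drop to length $1$. The paper handles this with one trick: replace the chain $w=u_1,\dots,u_k=abc\inv$ by $w,\,1w,\,1u_1,\dots,1u_k,\,abc\inv$. Every term now has length $\ge 2$, and consecutive terms are still neighbors (using $1\cdot x=x$ at the ends), so each step is realized by attaching a single triangle to the previous diagram. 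That is the whole proof.

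Your route is genuinely different and, as you yourself signal, not complete. The induction on $\abs w$ only fires when $w$ is cyclically reducible and $\abs w\ge 3$; the ``base case'' therefore consists of \emph{all} cyclically reduced words of every length, together with all words of length $2$. For these you defer to the full van~Kampen lemma and a conversion of singular disk diagrams into diagrams in the paper's restricted sense. Two points deserve attention. First, your cut-vertex step splits $w$ into strictly shorter trivial subwords, but those subwords may have length $1$; padding them with the letter $1$ returns you to an unresolved length-$2$ instance, so the recursion as stated is not well-founded. Second, the length-$2$ case itself (e.g.\ $\ovr{ab}=1$ with $ab$ undefined in $P$, which can occur for a general pree since no embedding of $P$ into $U(P)$ has yet been established) is never actually dispatched. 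The paper's prepend-$1$ device is precisely the mechanism that dissolves both of these difficulties at once, and it is what your argument is missing.
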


\begin{proof}
$U(P)$ is the quotient of the free semigroup $S$ over $P$ by the congruence $\sim$ with generators $ab\sim c$ for all products $ab=c$ defined in $P$. Say that $v, w\in S$ are {\em neighbors} if one is obtained from the other by replacing a subword $ab$ with $c$. Observe that if  $v$ and $w$ are neighbors of length at least 2, and $D$ is a diagram for $w$, then a diagram for $v$ can be obtained by attaching a triangle to $D$. 

Suppose $|w|\ge 2$ and $w$ defines the identity in $U(P)$. Then there is a sequence of neighbors 
\[ w=u_1, u_2, \ldots, abc\inv \]
where $ab=c$ is any product defined in $P$. As all elements of the sequence 
\[ w, 1w, 1u_1, 1u_2, \ldots, 1abc\inv, abc\inv\]
have size at least 2, it follows that there is a diagram for $w$.

Conversely if $w$ is a perimeter label for a diagram $D$, then by removing triangles we can reduce $D$ to a single triangle $T$. The corresponding sequence of neighbors begins with $w$ and ends with a label of the perimeter of $T$. It follows that $\overline w = 1$.
\end{proof}

\section{Biautomaticity}\label{biautomaticity}

We consider a fixed pree $P$ satisfying Axioms A(4) and A(5) and show that it is biautomatic. Much of our argument follows that in~\cite{GS1}. 

\begin{lemma}\label{d6} If $D$ is a minimal diagram without internal vertices of degree 2, then all internal vertices have degree at least 6.
\end{lemma}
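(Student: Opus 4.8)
The plan is to rule out internal vertices of degree $3$, $4$, and $5$; since no vertex has degree $1$ and $D$ has none of degree $2$ by hypothesis, this suffices. Suppose $p$ is internal of degree $k\in\{3,4,5\}$, and label the triangles around $p$ cyclically $T_1,\dots,T_k$ so that $T_i$ has vertices $p,v_i,v_{i+1}$ (indices mod $k$), the edge from $p$ to $v_i$ is labeled $a_i$, and the edge from $v_i$ to $v_{i+1}$ is labeled $b_i$; then reading the boundary of $T_i$ gives the product $a_i\inv a_{i+1}=b_i$. First I would check that $v_1,\dots,v_k$ are pairwise distinct: if two coincided then two of the $T_i$ would have the same vertex set, and Lemma~\ref{boundary} would force an internal vertex of degree $2$, contrary to hypothesis. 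Hence the closed star $D_p$ of $p$ is a genuine subdiagram of $D$, its boundary is the embedded cycle labeled $w:=b_1b_2\cdots b_k$, and $\ovr w=1$ in $U(P)$. Since $D$ is minimal, excising $D_p$ and gluing in any diagram with boundary label $w$ produces a diagram with the same boundary as $D$; so it is enough to produce a diagram for $w$ of area less than $k$ (the area of $D_p$) to reach a contradiction.

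For $k=3$ the three triangles $T_1,T_2,T_3$ fit around $p$, so by the geometric form of the associative law the perimeter $b_1b_2b_3$ is itself the boundary of a single triangle. That is a diagram for $w$ of area $1<3$, a contradiction.

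For $k\ge 4$ I would descend, cutting $k$ down by one. By Axiom A(4) (if $k=4$) or A(5) (if $k=5$) some product $b_ib_{i+1}$ is defined, and by the cyclic symmetry of the configuration we may assume $b_1b_2=c$ is defined for some $c\in P$. One application of the associative law to the triple $a_1,b_1,b_2$ --- using $a_1b_1=a_2$ read off $T_1$ and $a_2b_2=a_3$ read off $T_2$ --- shows that $a_1c=a_3$, equivalently $a_1\inv a_3=c$, is defined; geometrically this is just the three triangles $T_1$, the triangle for $b_1b_2=c$, and $T_2$ collapsing around $v_2$. Now $a_1,a_3,a_4,\dots,a_k$ is a degree-$(k-1)$ configuration about $p$, with consecutive products $a_1\inv a_3=c$, $a_3\inv a_4=b_3$, \dots, $a_k\inv a_1=b_k$ all defined; if $k-1=4$ then Axiom A(4) applies to $a_1,a_3,a_4,a_5$, and if $k-1=3$ we are in the base case. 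By induction there is a diagram $E$ for the word $c\,b_3b_4\cdots b_k$ of area $(k-1)-2$. Gluing onto $E$, along its $c$-labeled boundary edge (with the matching orientation), the triangle carrying $b_1b_2=c$ gives a diagram for $b_1b_2b_3\cdots b_k=w$ of area $(k-3)+1=k-2<k$. This contradiction completes the descent, and the lemma.

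The routine part is the repeated use of the associative law to manufacture the edges labeled $c$, together with the (easy) verification that the small diagrams we assemble really are diagrams, with no forced vertex identifications. The one place that needs genuine care is the setup: checking that $D_p$ is honestly a subdiagram carrying the boundary label $w$ --- i.e.\ that the $v_i$ are distinct --- and that the excise-and-reglue step leaves $\partial D$ untouched. That is precisely where Lemma~\ref{boundary} and the no-internal-degree-$2$ hypothesis enter, and I expect it to be the main obstacle to a careful write-up; the rest is pushing the associative law around.
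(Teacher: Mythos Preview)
Your argument is correct and follows the same line as the paper's: use the associative law for degree~$3$, Axioms A(4) and A(5) for degrees $4$ and $5$, and invoke Lemma~\ref{boundary} together with the no-degree-$2$ hypothesis to rule out vertex coincidences among the $v_i$. The only difference is packaging: the paper removes $p$ and adds one or two diagonal edges directly, while you phrase the degree-$5$ case as an inductive descent $5\to 4\to 3$, explicitly re-invoking A(4) on the reduced configuration $a_1,a_3,a_4,a_5$; this is the same content, just made more explicit.
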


\begin{proof}
If an internal vertex $p$ has degree 3, then by the associative law from 
Definition~\ref{associative} we can remove $p$ and all incident edges and still have a diagram over $P$. Likewise if $p$ has degree 4 or 5, then Axioms A(4) and A(5) guarantee that one can discard $p$ and construct a diagram of smaller area by adding one or two internal edges.
We must check that each new triangle created by the addition of edges has all vertices distinct. But if it did not, $D$ would have two triangles sharing the same three vertices, contrary to Lemma~\ref{boundary}.
See Figure~\ref{spree}.
\end{proof}

\begin{lemma} \label{galleries} Let $w$ be a word of length at least 3 
defining the identity in $U(P)$, and let $D$ be a diagram of minimal area 
with boundary $w$. Let $\delta_2$ and $\delta_3$ be the number of 
vertices of degree 2 and 3 respectively on the boundary of $D$, and let 
$\delta_5$ be the number of degree greater than 4, then $2\delta_2 + 
\delta_3 \ge 6 + \delta_5$. Further, if equality holds, then all 
internal vertices have degree 6.
\end{lemma}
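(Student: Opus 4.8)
The plan is to read off the inequality from the combinatorial Gauss--Bonnet identity~\eqref{eq:curvature} of Lemma~\ref{le:curvature}, after first pinning down which degrees are available to the vertices of $D$. Since $|w|\ge 3$ and $D$ is minimal, $D$ has no internal vertex of degree $2$: by Lemma~\ref{minimal} such a vertex would force $w=aa\inv$, a word of length $2$, contrary to hypothesis. Therefore Lemma~\ref{d6} applies, and every internal vertex of $D$ has degree at least $6$. Consequently each summand $d(p)-6$ on the right-hand side of~\eqref{eq:curvature} is nonnegative, so that right-hand side is at least $6$.

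Next I would estimate the left-hand side from above. A boundary vertex of degree $2$, $3$, or $4$ contributes $2$, $1$, or $0$ respectively to $\sum^{\circ}(4-d(p))$, while a boundary vertex of degree $k\ge 5$ contributes $4-k\le -1$. Hence, with $\delta_2,\delta_3,\delta_5$ as in the statement,
\[
\sum^{\circ}\bigl(4-d(p)\bigr)\;\le\;2\delta_2+\delta_3-\delta_5 .
\]
Combining this with the lower bound from the previous paragraph gives $2\delta_2+\delta_3-\delta_5\ge 6$, which is the asserted inequality. If moreover equality holds there, then in particular $6+\sum^{\bullet}(d(p)-6)=6$; since every internal summand $d(p)-6$ is nonnegative, they must all vanish, i.e.\ every internal vertex of $D$ has degree exactly $6$, as claimed.

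I do not expect a serious obstacle once Lemma~\ref{le:curvature} and Lemma~\ref{d6} are in hand; the argument is essentially the bookkeeping above. The one point deserving attention is the justification of the hypothesis of Lemma~\ref{d6}, namely that a minimal diagram whose boundary word has length at least $3$ has no internal vertex of degree $2$, and checking that the degenerate case is consistent: a single triangle has $\delta_2=3$, $\delta_3=\delta_5=0$, and no internal vertices, so it realizes equality with the ``all internal vertices have degree $6$'' clause holding vacuously.
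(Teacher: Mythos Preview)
Your argument is correct and is essentially the paper's own proof: invoke Lemmas~\ref{minimal} and~\ref{d6} to force internal degree at least~$6$, then read the inequality and its equality case directly off the curvature identity~\eqref{eq:curvature} using the contributions $2$, $1$, $0$, $\le -1$ from boundary vertices of degree $2$, $3$, $4$, $\ge 5$. The only additions you make are the explicit check that $|w|\ge 3$ rules out the degenerate case of Lemma~\ref{minimal} and the verification of the single-triangle case, both of which are fine.
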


\begin{proof}
Each vertex of degree 2 contributes 2 to the left-hand side of the 
Equation~\ref{eq:curvature} in Lemma~\ref{le:curvature}. Likewise each vertex of degree 3 contributes 
1, and vertices of degree greater than 4 contribute -1 or less. As all 
internal vertices have degree at least 6 by Lemmas~\ref{minimal} and~\ref{d6}, 
Equation~\ref{eq:curvature} yields the desired result.
\end{proof}

\begin{lemma} \label{45} Words of length 4 or 5 which define the 
identity in $U(P)$ are reducible.
\end{lemma}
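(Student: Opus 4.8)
The plan is to realize $w$ by a minimal-area van Kampen diagram over the presentation $P$ and then run the curvature count of Lemma~\ref{galleries}. Since $\overline w = 1$ and $|w|\ge 2$, the earlier lemma identifying $L(P)$ produces a diagram with boundary word $w$; let $D$ be one of minimal area. As $|w|$ is $4$ or $5$, in particular $|w|\ge 3$, so Lemma~\ref{galleries} applies directly to $D$. Writing $\delta_2$, $\delta_3$ for the numbers of boundary vertices of degree $2$ and $3$, and $\delta_5$ for the number of boundary vertices of degree greater than $4$, it gives $2\delta_2+\delta_3 \ge 6+\delta_5 \ge 6$. Note that Lemma~\ref{galleries} already absorbs Lemmas~\ref{minimal} and~\ref{d6}: it is exactly where minimality is used to force internal vertices to have degree at least $6$, so no separate discussion of internal vertices of degree $2,3,4,5$ is needed here.

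The next step is a trivial count. The boundary of $D$ is a simple closed curve reading the word $w$, hence has exactly $|w|$ edges and, since vertices are not identified when a diagram is built, exactly $|w|\le 5$ distinct boundary vertices. In particular $\delta_2+\delta_3 \le 5$. Subtracting this from $2\delta_2+\delta_3\ge 6$ yields $\delta_2 \ge 6-5 = 1$, so $D$ has at least one boundary vertex $p$ of degree $2$.

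The final step is to convert "boundary vertex of degree $2$" into "reducible subword of $w$". The vertex $p$ has only its two boundary edges, and these are consecutive along the boundary path; since the faces of $D$ tile the bounded disk and are triangles, $p$ is a vertex of exactly one face $T$, and the two edges at $p$ are two of the three edges of $T$. Traversing them in the direction in which the boundary walk of $D$ runs gives two consecutive letters of the cyclic word $w$; by the remark in Section~\ref{prees} that the six edge-readings around any triangle of $P$ are all defined as soon as any one of them is, those two letters have a product defined in $P$. Hence $w$ contains a reducible factor, i.e.\ $w$ is reducible, as claimed.

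I expect the only genuinely delicate point to be this last translation: one must check that a degree-$2$ boundary vertex really does sit on a single triangular face (rather than, say, being a spurious vertex of degree two with no face between its edges), and that the resulting pair of consecutive letters of $w$ is among the admissible readings of that triangle's perimeter, so that the product is defined no matter how the two edges are oriented relative to the boundary walk. Everything else — the curvature inequality and the vertex count — is bookkeeping already supplied by Lemma~\ref{galleries}.
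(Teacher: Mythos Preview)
Your curvature count is right, but the conclusion you draw from $\delta_2\ge 1$ is too strong. A degree-$2$ boundary vertex gives you two consecutive letters of the \emph{cyclic} boundary word whose product is defined; it does not automatically give two successive letters of the linear word $w=a_1\cdots a_n$. If the single degree-$2$ vertex happens to be the basepoint of $w$ on the boundary cycle, the pair you produce is $a_na_1$, which is not a pair of successive letters in the sense of Definition~\ref{reducible}. So you have only shown that $w$ is \emph{cyclically} reducible, and the lemma (as used later, where one checks the four linear pairs of a length-$5$ word and pins down which one must be defined) really is about linear reducibility.

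The paper closes exactly this gap. For $|w|=4$ your own inequality, sharpened to $\delta_2+\delta_3\le 4$, already gives $\delta_2\ge 2$; with two degree-$2$ vertices at least one lies in the interior of $w$, and you are done. For $|w|=5$ the bound $\delta_2\ge 1$ is all the counting yields, and the residual case $\delta_2=1$, $\delta_3=4$ (one ear, four degree-$3$ corners, all internal vertices of degree $6$) has to be excluded by a direct combinatorial check that no such triangulated disk exists. That extra step is what is missing from your argument; once it is supplied, $\delta_2\ge 2$ holds in every case and your translation from ``degree-$2$ boundary vertex'' to ``defined product of consecutive letters'' finishes the proof.
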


\begin{proof}
Let $w$ be such a word and apply Lemma~\ref{galleries} to a minimal 
diagram for $w$. If $\delta_2\ge 2$, then some boundary vertex in the 
interior of $w$ has degree 2, and we are done. The only other possibility 
is $\abs w = 5$, $\delta_2=1$, and $\delta_3=4$. But it is easy to see 
that there is no diagram with these parameters.
\end{proof}

\begin{theorem}\label{A45} If $P$ is a pree satisfying Axioms A(4) and 
A(5), then $P$ embeds in $U(P)$, and the multiplication in $P$ is 
induced by the multiplication in $U(P)$.
\end{theorem}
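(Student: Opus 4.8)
The plan is to derive both assertions from the classification of \emph{short} minimal diagrams already established in Section~\ref{cancellation}. First, the embedding. Suppose, for contradiction, that $a\neq b$ in $P$ but $\overline a=\overline b$; then the length-$2$ word $w=ab\inv$ has $\overline w=1$, so (words of length $\ge 2$ representing the identity bound diagrams) $w$ is the boundary label of a diagram, and I take $D$ to be one of minimal area. Since a single triangle has boundary of length $3$, the area of $D$ is at least $2$. If $D$ had no internal vertex of degree $2$, then by Lemma~\ref{d6} every internal vertex would have degree $\ge 6$, so in~\eqref{eq:curvature} the left-hand sum (over the two boundary vertices) would be at most $4$ while the right-hand side would be at least $6$ — absurd. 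Hence $D$ has an internal vertex of degree $2$, and Lemma~\ref{minimal} then forces $w$ to equal $cc\inv$ for some $c\in P$ (up to cyclic permutation and inverse). Comparing letters and using uniqueness of inverses in a pree gives $a=b$, the desired contradiction, so $\pi\colon P\to U(P)$ is injective.

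Next, the multiplication. One direction is immediate: if $ab=c$ in $P$ then $abc\inv$ is a defining relator of $U(P)$, so $\overline a\,\overline b=\overline c$. For the converse, suppose $a,b,c\in P$ with $\overline a\,\overline b=\overline c$; the length-$3$ word $w=abc\inv$ has $\overline w=1$, so it bounds a minimal diagram $D$. If $D$ is a single triangle, then $abc\inv$ is one of the six boundary readings of that triangle, and by the remark following Definition~\ref{pree} each of the six corresponding products is defined in $P$; in particular $ab=c$ in $P$, as required.

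It therefore remains to rule out the case that $D$ has area at least $2$, and here I would invoke Lemma~\ref{galleries}: the boundary of $D$ has three vertices, so $\delta_2+\delta_3+\delta_5\le 3$ while $2\delta_2+\delta_3\ge 6+\delta_5$. Since $2\delta_2+\delta_3=\delta_2+(\delta_2+\delta_3)\le\delta_2+3$, this forces $\delta_2=3$ and $\delta_3=\delta_5=0$: every boundary vertex has degree $2$. But a boundary vertex of degree $2$ meets exactly one face, a triangle whose remaining two edges are precisely the two boundary edges at that vertex; chasing this around the three boundary vertices shows $D$ is that single triangle, contradicting area $\ge 2$. Hence $D$ is a single triangle and $ab=c$ in $P$.

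I do not anticipate a serious obstacle: the theorem is essentially a readout of the small-cancellation machinery of Section~\ref{cancellation}. The one step needing genuine care is the planar bookkeeping just used (all boundary vertices of degree $2$ implies area $1$), and, more conceptually, checking that the length-$2$ and length-$3$ words $ab\inv$ and $abc\inv$ are treated as honest words over the alphabet $P$ — which they are, since $1$, $a\inv$, and so on are genuine letters — so that the characterization of $L(P)$ together with Lemmas~\ref{minimal}, \ref{d6} and~\ref{galleries} apply verbatim.
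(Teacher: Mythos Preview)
Your proof is correct and takes the same approach as the paper: dispose of the $|w|=2$ case via Lemmas~\ref{minimal} and~\ref{d6} together with the curvature formula~\eqref{eq:curvature}, then use Lemma~\ref{galleries} to force a minimal diagram with $|w|=3$ to be a single triangle. The paper's version is simply terser, leaving the inequality bookkeeping and the ``three boundary vertices of degree $2$ forces area $1$'' step to the reader.
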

\begin{proof}
The theorem is proved by the same small cancellation argument used 
in~\cite{GS1}. Let $w$ be a word of length at least two over $P$ which 
defines the identity in $U(P)$ and consider a diagram $D$ of minimum 
area for $w$. We must show that if $w=ab$, then $b=a\inv$; and if 
$w=abc$, then $ab=c\inv$ in $P$. By Lemmas~\ref{minimal} and~\ref{d6} we may assume 
that $w=abc$ and all internal vertices of $D$ have degree at least 6. By 
Lemma~\ref{galleries} there are either three boundary vertices of degree 
2 or at least 4 boundary vertices. It follows that $D$ is a triangle. 
\end{proof}

The proof of the next theorem is given in a sequence of lemmas. It is 
useful to keep in mind the following example
$$
P=\set{(0,0),(0,1), (0,-1), (1, 0), (-1,0), (1,1), (-1,-1) }
$$
with $U(P)=Z\times Z$. The partial multiplication in $P$ is inherited from the usual addition in $Z\times Z$.

\begin{theorem}If $P$ is a finite pree satisfying Axioms A(4) and A(5), 
then $U(P)$ is biautomatic.
\end{theorem}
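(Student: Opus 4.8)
The plan is to follow the strategy of Gersten and Short for $C(3)$–$T(6)$ presentations, replacing their use of the $T(6)$ condition with the weaker consequence, established in Lemmas~\ref{d6} and~\ref{galleries}, that minimal diagrams over $P$ have all internal vertices of degree at least $6$ and satisfy the curvature inequality $2\delta_2+\delta_3\ge 6+\delta_5$ on the boundary. The first step is to fix a language $L$ of normal forms: take $L$ to be the set of reduced words $w$ over $P$ (in the sense of Definition~\ref{reducible}) which are \emph{geodesic}, i.e. of minimal length among all words over $P$ representing the same element of $U(P)$. Since $P$ is finite, $L$ is a language over a finite alphabet; using the small cancellation geometry one shows $L$ is regular by a standard pumping/local-to-global argument — a word fails to be geodesic precisely when some diagram exhibiting a shorter representative forces a ``pocket'' of boundedly many triangles, so non-geodesity is detectable by a finite-state machine reading a bounded window.

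The heart of the argument is the fellow-traveller property. Given $w\in L$ and $a\in P$, I must show that a geodesic representative $w'$ of $\overline{w}\,\overline{a}$ stays uniformly close to $w$. Form a minimal diagram $D$ whose boundary reads $w$ along the top, $a$ (a single edge) on one side, and $(w')\inv$ along the bottom. The curvature Lemma~\ref{galleries} applied to $D$ forces the interior to be thin: the inequality $2\delta_2+\delta_3\ge 6+\delta_5$, together with the fact that the two ``vertical'' sides contribute only boundedly many boundary vertices, limits how much positive curvature can be hidden, and hence — exactly as in the $C(3)$–$T(6)$ case of~\cite{GS1} — $D$ decomposes into a bounded-width band (a sequence of triangles each sharing an edge with the next) so that corresponding points of $w$ and $w'$ are joined by paths of bounded length. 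This gives the synchronous two-sided fellow-traveller property for right multiplication by generators; left multiplication is symmetric since the hypotheses on $P$ are left–right symmetric (Axioms A(4), A(5) and reducedness are preserved under $w\mapsto w\inv$ with each letter inverted). One also needs that $L$ maps onto $U(P)$, which is immediate since every element has a geodesic representative, and the uniqueness or bounded-to-one property is not required for biautomaticity, only surjectivity plus the fellow-traveller property, so this completes the verification that $(P, L)$ is a biautomatic structure for $U(P)$.

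The main obstacle I anticipate is the band/collar decomposition of the diagram $D$ used in the fellow-traveller step. Unlike the genuine $T(6)$ setting, where internal vertices of degree $3$, $4$, $5$ simply cannot occur and the diagram is literally a union of geodesic ``corridors,'' here degree-$3$, $4$, $5$ internal vertices are only \emph{removable} at the cost of adding edges (the content of Lemma~\ref{d6}); in a minimal diagram they are absent, but one must be careful that minimality is preserved under the cut-and-paste operations that extract the band, and that the added edges in the removability argument do not create triangles with repeated vertices — a point already flagged in the proof of Lemma~\ref{d6} via Lemma~\ref{boundary}. Making the constant in the fellow-traveller bound explicit therefore requires tracking how the curvature is distributed along a geodesic bigon or triangle, and checking that the ``$\delta_5$'' term on the right of the inequality in Lemma~\ref{galleries} cannot absorb arbitrarily much width. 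Once that combinatorial geometry is pinned down, the remaining verifications — regularity of $L$, surjectivity, and the formal equivalence between the fellow-traveller property and biautomaticity — are routine and follow~\cite{GS1} essentially verbatim.
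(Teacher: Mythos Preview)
Your plan has a genuine gap at precisely the point you yourself flag as the ``main obstacle''. You take $L$ to be \emph{all} geodesics and assert that the minimal diagram $D$ with boundary $w\,a\,(w')^{-1}$ decomposes into a bounded-width band ``exactly as in the $C(3)$--$T(6)$ case of~\cite{GS1}''. But that step in~\cite{GS1} uses the full $T(6)$ hypothesis, not merely the consequence that internal vertices of a minimal diagram have degree $\ge 6$. Under A(4)--A(5) the $T(6)$ condition need not hold, and the corridor decomposition of~\cite{GS1} does not go through verbatim. The paper is explicit about this: ``$L$ will consist of some geodesics, but not all of them. At this point in~\cite{GS1} use is made of the C3--T6 small cancellation hypothesis, which need not hold in our situation. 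We must proceed differently.''

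The paper's remedy is a strictly smaller regular language $L$ (Definition~\ref{combing}): strongly reduced words satisfying an additional local maximality condition on certain gallery-type subdiagrams. This extra condition is exactly what makes the fellow-traveller case analysis close. In the critical subcase (Figure~\ref{D2333P}), the definition of $L$ permits one to swap an edge $h_1r$ for an edge $g_1h_2$, producing an internal vertex of degree $5$ in a minimal diagram --- a contradiction. Without the restriction in Definition~\ref{combing}, that swap is not available and the argument stalls; your proposal offers nothing to replace it. Your regularity argument for the geodesic language is also too vague as stated: the paper obtains it by first characterizing geodesics combinatorially as the \emph{strongly irreducible} words (those admitting no gallery reduction of type $34^k3$), which is visibly a local, finite-state condition. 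The upshot is that your outline is missing the one new idea the theorem actually requires: the refined combing of Definition~\ref{combing} and the edge-swap contradiction it enables.
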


First we observe that Lemma~\ref{galleries} implies that in any diagram 
$D$ of minimum area there must be a number of intervals along the 
boundary consisting of two vertices of degree 2 or 3 either adjacent to 
each other or separated by vertices of degree 4. As in~\cite{GS1} we 
refer to these intervals as galleries. See Figure~\ref{gallery}.
\begin{figure}
\includegraphics[width=.7\textwidth]{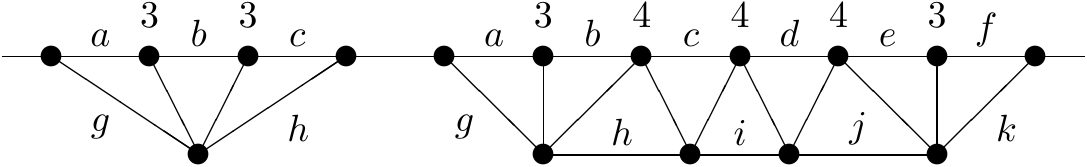}
\caption{Some galleries. \label{gallery}}
\end{figure}
Note that some of the vertices in this figure might be identified in 
$D$. In other words there is a morphism of diagrams from the gallery as 
illustrated to $D$. Under this morphism the labeled vertices have 
distinct images and their degrees are preserved.

The minimum number of galleries varies as in Table~\ref{g} depending on 
the value of $\delta_2$.

\begin{table}
\begin{tabular}{ccc}
$\delta_2$ & $\delta_3$ & Galleries\\ \hline
 $\ge 3$ &$\ge 0$& 3 \\
2 & $\ge 2$ & 4 \\
1 & $\ge 4$ & 5 \\
0 & $\ge 6$ & 6 \\
\end{tabular}
\caption{Effect of $\delta_2$ on $\delta_3$ and the minimum number of galleries. \label{g}}
\end{table}

Now we augment Definition~\ref{reducible}.

\begin{definition} A word $w=a_1\cdots a_n$ over $P$ is irreducible if  
for all $i$ the product $a_ia_{i+1}$ is not defined in $P$, and it is 
strongly irreducible if in addition it cannot be shortened by attaching 
to it a gallery of type $34^k3$ as in Figure~\ref{gallery} and replacing
$abc$ by $gh$ or $abcdef$ by $ghijk$ etc. Reductions of this latter type are called strong reductions.
\end{definition}

Both reductions and strong reductions shorten a word's length by 1. 
There are also reductions corresponding to other types of galleries. 
Reductions corresponding to galleries of type $24^k3$ shorten the length 
of a word by 2.

Since galleries are diagrams over $P$, it is clear that the above 
reductions do not change the element of $U(P)$ represented by a word, 
over $P$. Thus every word can be reduced to a strongly irreducible word 
representing the same group element.

\begin{lemma}\label{geodesic} A word $w$ over $P$ is the label of a 
geodesic in $U(P)$ if and only if it is strongly reduced and not the 
word $1$. Further the set of strongly irreducible words is a regular 
language.
\end{lemma}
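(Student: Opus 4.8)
I would prove the two assertions separately, following the Gersten--Short template. The forward implication of the geodesic characterization is immediate: an ordinary reduction, a strong reduction, or a replacement coming from any gallery preserves the element of $U(P)$ (galleries are diagrams over $P$) while shortening the word, so a word labelling a geodesic is strongly reduced; and the one-letter word $1$ represents the identity, which is labelled by the empty path, so $1$ is not geodesic. For the converse, assume $w$ is strongly reduced, $w\ne 1$, and not geodesic. We may assume $|w|\ge 2$, and then $\overline w\ne 1$: otherwise a minimal diagram for $w$ would, by Lemmas~\ref{minimal}, \ref{d6} and~\ref{galleries}, exhibit an ordinary or strong reduction of $w$. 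Pick a geodesic word $w'$ with $\overline{w'}=\overline w$; then $|w'|<|w|$ and $w'\ne 1$, so $w'$ is strongly reduced by the forward implication. Set $v=w(w')\inv$; this is a word over $P$ of length $\ge 3$ with $\overline v=1$, so there is a minimal diagram $D$ with boundary $v$, whose boundary circle is divided into a $w$-arc and a $w'$-arc by two vertices $p$ and $q$.

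The heart of the argument is then a gallery count on $D$. Since $|v|\ge 3$, $D$ has no interior vertex of degree $2$ (by Lemma~\ref{minimal} that would force $|v|=2$), so all interior vertices have degree $\ge 6$ by Lemma~\ref{d6}, and Lemma~\ref{galleries} together with Table~\ref{g} yields at least three pairwise disjoint gallery intervals along $\partial D$. I claim each gallery interval must contain $p$ or $q$ in its interior. If not, the interval lies within the $w$-arc or within the $w'$-arc, so its label is a subword $u$ of $w$ or of $(w')\inv$ (a prefix or suffix being allowed). Such an interval either carries a boundary vertex of degree $2$, giving an ordinary reduction of $u$, or is a $34^k3$ gallery which, being a diagram over $P$, lets us replace $u$ by a word one letter shorter representing the same element, i.e. a strong reduction. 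Either way we contradict strong reducedness of $w$, respectively of $w'$. Hence each of the three disjoint gallery intervals straddles a point of $\{p,q\}$, and disjoint intervals straddle distinct points; but $\{p,q\}$ has only two elements, a contradiction. Therefore $w$ is geodesic.

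For regularity, reducedness is a local condition (no two consecutive letters have a product defined in $P$), so the reduced words form a regular language. A reduced word admits a strong reduction exactly when it contains a subword $u$ with $|u|\ge 3$ that labels one side of a $34^{\,|u|-3}\,3$ gallery. Such a gallery is a thin strip of triangles, each glued to the previous one along a single edge, so it is built by scanning $u$ from one end while remembering only the labels of a bounded frontier of edges; since $P$ is finite this is a finite-state process, and the set of such $u$ is regular. The strongly irreducible words are then the reduced words containing no subword in this regular language, and hence form a regular language.

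The step I expect to be the main obstacle is the gallery count: one must check that the intervals furnished by Lemma~\ref{galleries} and Table~\ref{g} can genuinely be chosen pairwise disjoint, and that an interval which does not straddle $p$ or $q$ — including one having $p$ or $q$ as an endpoint, corresponding to reducing a prefix or suffix of $w$ or of $w'$ — really produces a reduction of $w$ or of $w'$ rather than merely of the closed word $v$. Once this dichotomy ("straddles a corner or else reduces one side") is pinned down, the count is purely arithmetic. The only other point needing care is the claim that a $34^k3$ gallery over a prescribed word can be recognized by a finite automaton, which rests on the strip having a bounded-width frontier; this is visible from the shape of the galleries in Figure~\ref{gallery}.
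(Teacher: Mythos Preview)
Your regularity argument and the forward implication are essentially the paper's; those parts are fine.

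For the converse your plan coincides with the paper's up to the point you yourself flag, but that obstacle is real and the three-galleries-versus-two-corners count does not close the argument. After ruling out $\delta_2\ge 3$ and galleries lying in the interior of one arc, the residual configuration is $\delta_2=2$ with both degree-$2$ vertices sitting exactly at the corners $p$ and $q$; Table~\ref{g} then gives four galleries, each of type $24^k3$ with its degree-$2$ end at a corner. None of these contains $p$ or $q$ in its interior, so in your dichotomy they should each reduce one side---but they do not. The gallery at $p$ running into the $w$-arc has top path $h_1,\,p,\,g_1,\ldots,g_{k+2}$, and the associated reduction shortens the mixed word $b_1^{-1}a_1\cdots a_{k+2}$, which is a subword of neither $w$ nor $w'$. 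So no contradiction with strong irreducibility of $w$ or of $w'$ arises, and your count stalls.

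The paper resolves this case by induction on $|w|$ rather than by a sharper gallery count. One peels off the $24^k3$ gallery at $p$ that runs into the $w'$-arc; deleting its triangles leaves a diagram whose boundary pairs the strongly irreducible suffix $a_2\cdots a_m$ against a word of length $n-1$ (where $m=|w|$, $n=|w'|$). The inductive hypothesis makes $a_2\cdots a_m$ geodesic, whence $m-1\le n-1$, contradicting $n<m$. This inductive peel-off is the missing ingredient in your sketch; the disjointness of the gallery intervals is not the issue.
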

\begin{proof}
Clearly the set of irreducible words not equal to $1$ is regular. 
Further it is straightforward to construct a finite automaton over the 
alphabet $P\times P \cup P\times \set{\$}$ which accepts a pair of words 
$(w,v\$)$ if and only if $w$ and $v$ are words over $P$ and $v$ is 
obtained from $w$ from a single reduction corresponding to attaching a 
gallery to $w$. Here $\$$ is a padding symbol not in $P$. The triangles 
of $P$ can serve as the vertices of a suitable automaton. It follows by 
standard techniques that the set of all words over $P$ which are strongly 
reducible forms a regular language. Hence its complement is regular and 
the intersection of the complement with the irreducible words not equal 
to $1$ is regular too. Thus the second assertion of the lemma holds.

Clearly geodesics are strongly irreducible lest there be a shorter word 
denoting the same group element. Suppose that $w$ is strongly 
irreducible but not geodesic. In particular $w \ne aa\inv$. Also $\abs 
w\ne 1$ by Theorem~\ref{A45}, so $\abs w \ge 3$. By our hypothesis there 
must be a shorter word $v$ representing the same group element as $w$. 
Thus there will be a diagram $D$ of minimum area with boundary label 
$wv\inv$. Pick $v$ so that $D$ has smallest possible area.

Consider the possibilities in Table~\ref{g}. There cannot be 3 vertices 
of degree 2, because then at least one of them would be in the interior 
of $w$ or $v$ contradicting the fact that both are strongly irreducible. 
Likewise there can be no gallery all of whose vertices of degree 3 or 4 
lie in the interior of $w$ or in the interior of $v$. Thus there are 4 
galleries and 2 vertices of degree 2. The only way they all fit into $D$ 
is if $\abs v \ge 1$, and the two boundary vertices separating the 
boundary segments with labels $w$ and $v$ are of degree 2 and are each 
part of two galleries. Each gallery has a vertex of degree 2 or 3 at its 
other end. Since these vertices are in the interior of $w$ or $v$, they 
must be of degree 3. In particular $\abs v \ge 2$.

Let $p$ and $q$ be the vertices separating $w$ and $v$ so that the 
boundary of $D$ consists of a path with label $w$ from $p$ to $q$ and a 
path with label $v$ also from $p$ to $q$. Let $w=a_1a_2\cdots w_m$, and 
$v=b_1b_2\cdots b_n $, and denote by $r$ the boundary vertex between 
$a_1$ and $a_2$.  By the previous paragraph there is a gallery of type 
$24^k3$ attached to the boundary segment with label $a_1\inv b_1\cdots 
b_{k+2}$. This gallery affords a reduction of $a_1\inv b_1 \cdots 
b_{k+2}$ to $u=c_1\cdots c_{k+1}$. It follows that the path from $p$ to 
$q$ with label $a_1c_1\cdots c_{k+1}b_{k+3}\cdots b_m$ is a geodesic. If 
this path is $w$, then we are done. Otherwise there are paths from $r$ 
to $q$ with labels $a_2\cdots a_m$ and $c_1\cdots c_{k+1}b_{k+3}\cdots 
b_n$. The first label is clearly strongly irreducible, and the second is 
a geodesic. By induction on length, $m-1=n-1$.
\end{proof}

In order to show that $U(P)$ is biautomatic, we must find a set $L$ of 
words over $P$ which maps onto $U(P)$ and such that for some constant 
$K$ two paths which begin and end a distance at most one apart and which 
have labels in $L$ $K$-synchronously fellow travel. $L$ will consist of some
geodesics, but not all of them. At this point in~\cite{GS1} use is made 
of the C3-T6 small cancellation hypothesis, which need not hold in our situation. We must proceed differently.

\begin{figure}
\includegraphics[width=.7\textwidth]{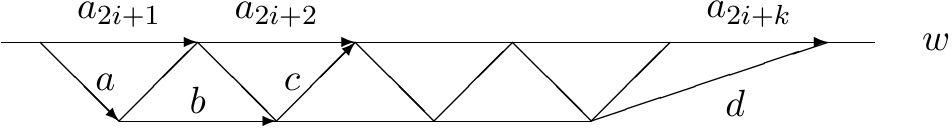}
\caption{ Defining $L$ \label{maximal}}
\end{figure}

\begin{definition} \label{combing} Let $L$ be the set of strongly 
reduced words $w=a_1\cdots a_n$ over $P$ with the property that whenever 
there is a valid diagram of the type illustrated in Figure~\ref{maximal} 
with $k\ge 3$, then the product $bc$ is defined.
\end{definition}

\begin{lemma}
$L$ is a regular set which maps onto $U(P)$.
\end{lemma}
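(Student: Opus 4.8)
The plan is to establish the two assertions separately. Regularity of $L$ follows the same template as the proof that strongly irreducible words form a regular language in Lemma~\ref{geodesic}. The defining condition of $L$ in Definition~\ref{combing} is a local obstruction: a word $w$ fails to lie in $L$ precisely when one can attach to some subword $a_i\cdots a_j$ of $w$ a diagram of the type in Figure~\ref{maximal} with $k\ge 3$ in which the product $bc$ is \emph{not} defined. Since each such diagram is built by attaching finitely many triangles of $P$ along successive boundary edges, the triangles of $P$ (together with a bounded amount of state recording how far along such a forbidden configuration we are) can serve as the states of a finite automaton over $P$ that recognizes the words admitting such a configuration. Thus the set of words admitting a forbidden configuration is regular; intersecting its complement with the regular set of strongly reduced words (Lemma~\ref{geodesic}) shows $L$ is regular.

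For surjectivity onto $U(P)$, the plan is to show that every strongly reduced word can be rewritten, without changing the element of $U(P)$ it represents and without increasing its length, into a word in $L$; since strongly reduced words already surject onto $U(P)$ (every word reduces to a strongly irreducible one representing the same element, and by Lemma~\ref{geodesic} these are exactly the geodesics together with handling of the word $1$), this suffices. So suppose $w=a_1\cdots a_n$ is strongly reduced but not in $L$, witnessed by a diagram as in Figure~\ref{maximal} with $k\ge 3$ attached along $a_i\cdots a_j$ and with $bc$ undefined. The idea is that this diagram exhibits an alternative path with the same endpoints and the same length along which $a_i\cdots a_j$ is replaced by the other boundary word of the attached diagram; one checks from the shape of Figure~\ref{maximal} that this replacement word has the same length as $a_i\cdots a_j$, so the new word $w'$ represents the same element of $U(P)$ and has $|w'|=|w|$. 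One then re-reduces $w'$ to a strongly reduced word of length at most $|w|$ and argues, by an appropriate complexity measure, that iterating this process terminates in a word in $L$.

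The main obstacle is the termination argument: a single replacement as above might remove one forbidden configuration but create others, so naive induction on length fails, and one must find a secondary invariant that strictly decreases. The natural candidate is to pick, among all words of minimal length representing the given element of $U(P)$, one for which the minimal area of a diagram realizing some forbidden configuration is as small as possible — or, following the pattern of the proof of Lemma~\ref{geodesic}, to choose the transformed word so that the associated diagram has least possible area, and show the forbidden configuration in the new word would contradict that minimality. Making this precise — in particular verifying that the replacement word has exactly the same length (so that length is preserved, not just non-increased) and that the area-minimality can be propagated — is the delicate point; the small cancellation machinery of Section~\ref{cancellation}, especially Lemmas~\ref{galleries} and~\ref{45} controlling how galleries sit against $w$ and $v$, is what makes it go through.
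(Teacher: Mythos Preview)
Your treatment of regularity is correct and matches the paper's: the forbidden configurations form a regular set for the reasons you give, and intersecting the complement with the strongly reduced words (regular by Lemma~\ref{geodesic}) gives regularity of $L$.

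For surjectivity, however, there is a genuine gap. You correctly isolate the difficulty --- a single substitution along a Figure~\ref{maximal} diagram preserves length and the group element, so one needs a secondary invariant to guarantee termination --- but you do not supply one. Neither of your candidates works as stated: the ``minimal area of a forbidden configuration'' has no evident reason to drop under the substitution, and the area-minimality pattern from Lemma~\ref{geodesic} does not transfer, since there the diagram shrinks because a geodesic $v$ was being shortened, whereas here both boundary words are geodesics of equal length.

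The paper's argument avoids an iterative rewriting altogether and instead makes an extremal choice up front: among all geodesics representing $g$, pick $w=a_1\cdots a_n$ that lexicographically maximizes the sequence of integers $N_i=\#\{ef : \overline{ef}=\overline{a_{2i-1}a_{2i}}\}$. If $w\notin L$, take a forbidden diagram starting at position $2i+1$ with top letters $a,b,c,\ldots$ and $bc$ undefined. For any $ef$ with $\overline{ef}=\overline{a_{2i+1}a_{2i+2}}$, the length-$5$ word $efc^{-1}b^{-1}a^{-1}$ is trivial in $U(P)$, hence reducible by Lemma~\ref{45}; irreducibility of $ef$, of $b^{-1}a^{-1}$, and (by hypothesis) of $c^{-1}b^{-1}$ forces $fc^{-1}=f'$ to be defined, so $ef'$ represents $\overline{ab}$. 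This gives an injection from pairs representing $\overline{a_{2i+1}a_{2i+2}}$ into pairs representing $\overline{ab}$, and $(a,b)$ is not in the image (else $bc$ would be defined). Hence substituting the top of the gallery for $a_{2i+1}\cdots a_{2i+k}$ strictly increases $N_{i+1}$ without disturbing $N_1,\ldots,N_i$, contradicting the choice of $w$. This counting-via-Lemma~\ref{45} step is the missing idea in your proposal.
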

\begin{proof}
The set of words which admit a valid diagram as in Figure~\ref{maximal} 
such that $bc$ is not defined is clearly regular. It follows from this 
observation and from Lemma~\ref{geodesic} that $L$ is regular. To show 
that $L$ maps onto $U(P)$ let $g$ be any element of $U(P)$ and pick a 
geodesic $w=a_1\cdots a_n$ from $1$ to $g$ with the property that the 
number of words $ef$ representing the same group element as $a_1a_2$ is 
maximal, and subject to that the number representing the same group 
element as $a_3a_4$ is maximal, and so forth.

Suppose $w$ admits a diagram as in Figure~\ref{maximal} for which $bc$ 
is not defined. Let $ef$, representing the same group element as 
$a_{2i+1}a_{2i+2}$. As $efc\inv b\inv a\inv$ represents the identity in 
$U(P)$, it is reducible by Lemma~\ref{45}. On the other hand $b\inv 
a\inv$ and $ef$ are subwords of geodesics and so are irreducible. 
Further $bc$ is irreducible by assumption. Thus $fc\inv=f'$ for some 
$f'\in P$. We conclude that $ef'$ represents the same group element as 
$ab$.

But no $ef'$ obtained as above can have $e=a$ and $f'=b$. For if so, 
$1=efc\inv b\inv a\inv = afc\inv b\inv a\inv$ would imply that $bc=f$ in 
$P$. Thus the geodesic obtained by substituting $ab\cdots d$ for 
$w_{2i+1}\cdots w_{2i+k}$ in $w$ contradicts the choice of $w$.
\end{proof}

It remains to show that for some constant $K$ two paths which begin and 
end a distance at most one apart and which have labels in $L$, 
$K$-synchronously fellow travel. Suppose $w = a_1\cdots a_n$ and $v = 
b_1\cdots b_m$ are labels of two such paths. Let $g_i$ be the group 
element reached by $a_1\cdots a_i$, and define $h_i$ likewise with 
respect to $v$. We claim that for each $i$ there exists a $j$ such that 
$\rho(g_i,h_j)$, the distance in $U(P)$ between $g_i$ and $h_j$, is at 
most 2; and likewise with $i$ and $j$ reversed. Since $w$ and $v$ are 
both geodesics beginning a distance at most one apart, it follows by a 
straightforward argument that $\abs {i-j}\le 3$; and hence that $K=5$ 
suffices.

We use induction on $m+n$. It $m$ and $n$ are both at most 3, then the 
desired conclusion is immediate. Thus we assume $m\ge 4$ or $n\ge 4$. 
Since $w$ and $v$ are geodesics, it follows that $m\ge 2$ an $n\ge 2$.
Because of the way $L$ is defined, it suffices to show that 
$\rho(g_2,h_2)\le 1$.

By hypothesis there are letters $a,b\in P$ such that $awb\inv v\inv$ 
represents the identity in $U(P)$. Note that $a=1$ and $b=1$ are 
possible. Consider a corresponding diagram $D$ of minimum area with 
vertices labeled as in Figure~\ref{rectangle}.
\begin{figure}
\includegraphics[width=.5\textwidth]{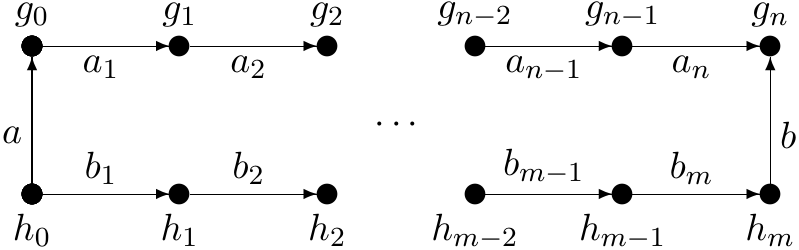}
\caption{ Two geodesics \label{rectangle}}
\end{figure}

\begin{figure}
\includegraphics[width=.5\textwidth]{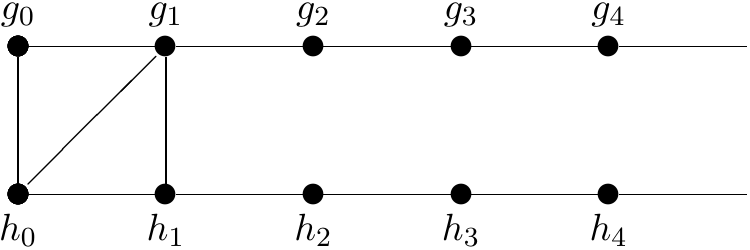}
\caption{ The case $\delta_2=1$. \label{D2333}}
\end{figure}
If $g_n$ is joined to $h_{m-1}$ or $h_m$ to $g_{n-1}$ in $D$, then we 
are done by induction on $m+n$. Thus we may assume that only $g_0$ or 
$h_0$ can have degree 2. Hence either $\delta_2=1$, and there are at 
least 5 galleries or $\delta_2=0$ and there are at least 6 galleries. As 
the vertices supporting a gallery cannot all lie on any one side, 6 
galleries is the maximum there can be. Consequently the inequality 
in Lemma~\ref{galleries} is an equality, and all internal vertices must 
have degree 6.

Suppose $\delta_2=1$. By symmetry we may assume $g_0$ has degree 2. 
Because there are at least 5 galleries, none attached to a single side, 
it must be that the other corner vertices all have degree 3. The 
situation is illustrated in Figure~\ref{D2333}.

Now consider the subdiagram $D'$ with corners $g_1$, $g_n$, $h_1$, 
$h_m$. Of course $D'$ is a diagram of minimum area for its boundary.
If either $g_1$ or $h_1$ has degree 2 with respect to this subdiagram, 
then there is an edge from $g_2$ to $h_2$, and we are done. The 
alternative is that the corners of $D'$ have degree 3, there are 6 
galleries, and all internal vertices of $D'$ have degree 6. See 
Figure~\ref{D2333P} where the degree of $g_2$ is 3 or 4 depending on 
whether or not $p=g_3$ and likewise for $h_2$ and $q$.
\begin{figure}
\includegraphics[width=.5\textwidth]{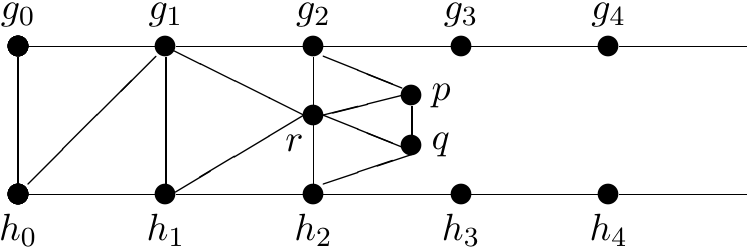}
\caption{ The case $\delta_2=1$ continued. \label{D2333P}}
\end{figure}
We know that the vertices $h_1,h_2, \ldots$ are part of a gallery in 
$D'$. That is, in $D'$,  $h_1$ and $h_k$ have degree 3 for some $k \ge 
2$ and all intervening boundary vertices have degree 4. But now 
Definition~\ref{combing} implies that we can remove the edge from $h_1$ 
to $r$ and replace it with an edge from $g_1$ to $h_2$. With this change 
we obtain a diagram of minimum area with an internal vertex of degree 5 
in contradiction to Lemma~\ref{minimal}.

It remains to consider the case in which $D$ has $\delta_2=0$. Here we 
begin with the situation of Figure~\ref{D3333} and use the preceding 
argument. 
\begin{figure}
\includegraphics[width=.5\textwidth]{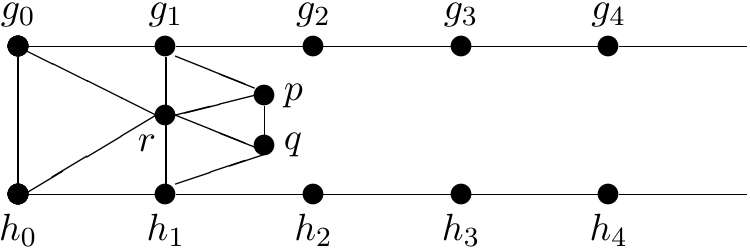}
\caption{ The case $\delta_2=0$. \label{D3333}}
\end{figure}
If $q \ne h_2$, then we replace the edge from $h_1$ to $q$ by one from 
$r$ to $h_2$ thereby decreasing the degree of $q$ to 5 and obtaining a 
contradiction. We obtain a similar contradiction if $p \ne g_2$. Finally 
if $q=h_2$ and $p=g_2$, then we are done.

\end{document}